\documentclass[12pt]{article}
\usepackage{amssymb}
\usepackage{amsfonts}
\usepackage{german}
%
%
 
\long\def\ig#1{\relax}
\ig{Thanks to Roberto Minio for this def'n.  Compare the def'n of
\comment in AMSTeX.}
 
\newcount \coefa
\newcount \coefb
\newcount \coefc
\newcount\tempcounta
\newcount\tempcountb
\newcount\tempcountc
\newcount\tempcountd
\newcount\xext
\newcount\yext
\newcount\xoff
\newcount\yoff
\newcount\gap%
\newcount\arrowtypea
\newcount\arrowtypeb
\newcount\arrowtypec
\newcount\arrowtyped
\newcount\arrowtypee
\newcount\height
\newcount\width
\newcount\xpos
\newcount\ypos
\newcount\run
\newcount\rise
\newcount\arrowlength
\newcount\halflength
\newcount\arrowtype
\newdimen\tempdimen
\newdimen\xlen
\newdimen\ylen
\newsavebox{\tempboxa}%
\newsavebox{\tempboxb}%
\newsavebox{\tempboxc}%
 
\makeatletter
\setlength{\unitlength}{.01em}%
\def\settypes(#1,#2,#3){\arrowtypea#1 \arrowtypeb#2 \arrowtypec#3}
\def\settoheight#1#2{\setbox\@tempboxa\hbox{#2}#1\ht\@tempboxa\relax}%
\def\settodepth#1#2{\setbox\@tempboxa\hbox{#2}#1\dp\@tempboxa\relax}%
\def\settokens[#1`#2`#3`#4]{%
     \def\tokena{#1}\def\tokenb{#2}\def\tokenc{#3}\def\tokend{#4}}
\def\setsqparms[#1`#2`#3`#4;#5`#6]{%
\arrowtypea #1
\arrowtypeb #2
\arrowtypec #3
\arrowtyped #4
\width #5
\height #6
}
\def\setpos(#1,#2){\xpos=#1 \ypos#2}
 
\def\bfig{\begin{picture}(\xext,\yext)(\xoff,\yoff)}
\def\efig{\end{picture}}
 
\def\putbox(#1,#2)#3{\put(#1,#2){\makebox(0,0){$#3$}}}
 
\def\settriparms[#1`#2`#3;#4]{\settripairparms[#1`#2`#3`1`1;#4]}%
 
\def\settripairparms[#1`#2`#3`#4`#5;#6]{%
\arrowtypea #1
\arrowtypeb #2
\arrowtypec #3
\arrowtyped #4
\arrowtypee #5
\width #6
\height #6
}
 
\def\resetparms{\settripairparms[1`1`1`1`1;500]\width 500}
 
\resetparms
 
\def\mvector(#1,#2)#3{
\put(0,0){\vector(#1,#2){#3}}%
\put(0,0){\vector(#1,#2){30}}%
}
\def\evector(#1,#2)#3{{
\arrowlength #3
\put(0,0){\vector(#1,#2){\arrowlength}}%
\advance \arrowlength by-30
\put(0,0){\vector(#1,#2){\arrowlength}}%
}}
 
\def\horsize#1#2{%
\settowidth{\tempdimen}{$#2$}%
#1=\tempdimen
\divide #1 by\unitlength
}
 
\def\vertsize#1#2{%
\settoheight{\tempdimen}{$#2$}%
#1=\tempdimen
\settodepth{\tempdimen}{$#2$}%
\advance #1 by\tempdimen
\divide #1 by\unitlength
}
 
\def\vertadjust[#1`#2`#3]{%
\vertsize{\tempcounta}{#1}%
\vertsize{\tempcountb}{#2}%
\ifnum \tempcounta<\tempcountb \tempcounta=\tempcountb \fi
\divide\tempcounta by2
\vertsize{\tempcountb}{#3}%
\ifnum \tempcountb>0 \advance \tempcountb by20 \fi
\ifnum \tempcounta<\tempcountb \tempcounta=\tempcountb \fi
}
 
\def\horadjust[#1`#2`#3]{%
\horsize{\tempcounta}{#1}%
\horsize{\tempcountb}{#2}%
\ifnum \tempcounta<\tempcountb \tempcounta=\tempcountb \fi
\divide\tempcounta by20
\horsize{\tempcountb}{#3}%
\ifnum \tempcountb>0 \advance \tempcountb by60 \fi
\ifnum \tempcounta<\tempcountb \tempcounta=\tempcountb \fi
}
 
\ig{ In this procedure, #1 is the paramater that sticks out all the way,
#2 sticks out the least and #3 is a label sticking out half way.  #4 is
the amount of the offset.}
 
\def\sladjust[#1`#2`#3]#4{%
\tempcountc=#4
\horsize{\tempcounta}{#1}%
\divide \tempcounta by2
\horsize{\tempcountb}{#2}%
\divide \tempcountb by2
\advance \tempcountb by-\tempcountc
\ifnum \tempcounta<\tempcountb \tempcounta=\tempcountb\fi
\divide \tempcountc by2
\horsize{\tempcountb}{#3}%
\advance \tempcountb by-\tempcountc
\ifnum \tempcountb>0 \advance \tempcountb by80\fi
\ifnum \tempcounta<\tempcountb \tempcounta=\tempcountb\fi
\advance\tempcounta by20
}
 
\def\putvector(#1,#2)(#3,#4)#5#6{{%
\xpos=#1
\ypos=#2
\run=#3
\rise=#4
\arrowlength=#5
\arrowtype=#6
\ifnum \arrowtype<0
    \ifnum \run=0
        \advance \ypos by-\arrowlength
    \else
        \tempcounta \arrowlength
        \multiply \tempcounta by\rise
        \divide \tempcounta by\run
        \ifnum\run>0
            \advance \xpos by\arrowlength
            \advance \ypos by\tempcounta
        \else
            \advance \xpos by-\arrowlength
            \advance \ypos by-\tempcounta
        \fi
    \fi
    \multiply \arrowtype by-1
    \multiply \rise by-1
    \multiply \run by-1
\fi
\ifnum \arrowtype=1
    \put(\xpos,\ypos){\vector(\run,\rise){\arrowlength}}%
\else\ifnum \arrowtype=2
    \put(\xpos,\ypos){\mvector(\run,\rise)\arrowlength}%
\else\ifnum\arrowtype=3
    \put(\xpos,\ypos){\evector(\run,\rise){\arrowlength}}%
\fi\fi\fi
}}
 
\def\putsplitvector(#1,#2)#3#4{
\xpos #1
\ypos #2
\arrowtype #4
\halflength #3
\arrowlength #3
\gap 140
\advance \halflength by-\gap
\divide \halflength by2
\ifnum \arrowtype=1
    \put(\xpos,\ypos){\line(0,-1){\halflength}}%
    \advance\ypos by-\halflength
    \advance\ypos by-\gap
    \put(\xpos,\ypos){\vector(0,-1){\halflength}}%
\else\ifnum \arrowtype=2
    \put(\xpos,\ypos){\line(0,-1)\halflength}%
    \put(\xpos,\ypos){\vector(0,-1)3}%
    \advance\ypos by-\halflength
    \advance\ypos by-\gap
    \put(\xpos,\ypos){\vector(0,-1){\halflength}}%
\else\ifnum\arrowtype=3
    \put(\xpos,\ypos){\line(0,-1)\halflength}%
    \advance\ypos by-\halflength
    \advance\ypos by-\gap
    \put(\xpos,\ypos){\evector(0,-1){\halflength}}%
\else\ifnum \arrowtype=-1
    \advance \ypos by-\arrowlength
    \put(\xpos,\ypos){\line(0,1){\halflength}}%
    \advance\ypos by\halflength
    \advance\ypos by\gap
    \put(\xpos,\ypos){\vector(0,1){\halflength}}%
\else\ifnum \arrowtype=-2
    \advance \ypos by-\arrowlength
    \put(\xpos,\ypos){\line(0,1)\halflength}%
    \put(\xpos,\ypos){\vector(0,1)3}%
    \advance\ypos by\halflength
    \advance\ypos by\gap
    \put(\xpos,\ypos){\vector(0,1){\halflength}}%
\else\ifnum\arrowtype=-3
    \advance \ypos by-\arrowlength
    \put(\xpos,\ypos){\line(0,1)\halflength}%
    \advance\ypos by\halflength
    \advance\ypos by\gap
    \put(\xpos,\ypos){\evector(0,1){\halflength}}%
\fi\fi\fi\fi\fi\fi
}
 
\def\putmorphism(#1)(#2,#3)[#4`#5`#6]#7#8#9{{%
\run #2
\rise #3
\ifnum\rise=0
  \puthmorphism(#1)[#4`#5`#6]{#7}{#8}{#9}%
\else\ifnum\run=0
  \putvmorphism(#1)[#4`#5`#6]{#7}{#8}{#9}%
\else
\setpos(#1)%
\arrowlength #7
\arrowtype #8
\ifnum\run=0
\else\ifnum\rise=0
\else
\ifnum\run>0
    \coefa=1
\else
   \coefa=-1
\fi
\ifnum\arrowtype>0
   \coefb=0
   \coefc=-1
\else
   \coefb=\coefa
   \coefc=1
   \arrowtype=-\arrowtype
\fi
\width=2
\multiply \width by\run
\divide \width by\rise
\ifnum \width<0  \width=-\width\fi
\advance\width by60
\if l#9 \width=-\width\fi
\putbox(\xpos,\ypos){#4}
{\multiply \coefa by\arrowlength
\advance\xpos by\coefa
\multiply \coefa by\rise
\divide \coefa by\run
\advance \ypos by\coefa
\putbox(\xpos,\ypos){#5} }%
{\multiply \coefa by\arrowlength
\divide \coefa by2
\advance \xpos by\coefa
\advance \xpos by\width
\multiply \coefa by\rise
\divide \coefa by\run
\advance \ypos by\coefa
\if l#9%
   \put(\xpos,\ypos){\makebox(0,0)[r]{$#6$}}%
\else\if r#9%
   \put(\xpos,\ypos){\makebox(0,0)[l]{$#6$}}%
\fi\fi }%
{\multiply \rise by-\coefc
\multiply \run by-\coefc
\multiply \coefb by\arrowlength
\advance \xpos by\coefb
\multiply \coefb by\rise
\divide \coefb by\run
\advance \ypos by\coefb
\multiply \coefc by70
\advance \ypos by\coefc
\multiply \coefc by\run
\divide \coefc by\rise
\advance \xpos by\coefc
\multiply \coefa by140
\multiply \coefa by\run
\divide \coefa by\rise
\advance \arrowlength by\coefa
\ifnum \arrowtype=1
   \put(\xpos,\ypos){\vector(\run,\rise){\arrowlength}}%
\else\ifnum\arrowtype=2
   \put(\xpos,\ypos){\mvector(\run,\rise){\arrowlength}}%
\else\ifnum\arrowtype=3
   \put(\xpos,\ypos){\evector(\run,\rise){\arrowlength}}%
\fi\fi\fi}\fi\fi\fi\fi}}
 
\def\puthmorphism(#1,#2)[#3`#4`#5]#6#7#8{{%
\xpos #1
\ypos #2
\width #6
\arrowlength #6
\putbox(\xpos,\ypos){#3\vphantom{#4}}%
{\advance \xpos by\arrowlength
\putbox(\xpos,\ypos){\vphantom{#3}#4}}%
\horsize{\tempcounta}{#3}%
\horsize{\tempcountb}{#4}%
\divide \tempcounta by2
\divide \tempcountb by2
\advance \tempcounta by30
\advance \tempcountb by30
\advance \xpos by\tempcounta
\advance \arrowlength by-\tempcounta
\advance \arrowlength by-\tempcountb
\putvector(\xpos,\ypos)(1,0){\arrowlength}{#7}%
\divide \arrowlength by2
\advance \xpos by\arrowlength
\vertsize{\tempcounta}{#5}%
\divide\tempcounta by2
\advance \tempcounta by20
\if a#8 %
   \advance \ypos by\tempcounta
   \putbox(\xpos,\ypos){#5}%
\else
   \advance \ypos by-\tempcounta
   \putbox(\xpos,\ypos){#5}%
\fi}}
 
\def\putvmorphism(#1,#2)[#3`#4`#5]#6#7#8{{%
\xpos #1
\ypos #2
\arrowlength #6
\arrowtype #7
\settowidth{\xlen}{$#5$}%
\putbox(\xpos,\ypos){#3}%
{\advance \ypos by-\arrowlength
\putbox(\xpos,\ypos){#4}}%
{\advance\arrowlength by-140
\advance \ypos by-70
\ifdim\xlen>0pt
   \if m#8%
      \putsplitvector(\xpos,\ypos){\arrowlength}{\arrowtype}%
   \else
      \putvector(\xpos,\ypos)(0,-1){\arrowlength}{\arrowtype}%
   \fi
\else
   \putvector(\xpos,\ypos)(0,-1){\arrowlength}{\arrowtype}%
\fi}%
\ifdim\xlen>0pt
   \divide \arrowlength by2
   \advance\ypos by-\arrowlength
   \if l#8%
      \advance \xpos by-40
      \put(\xpos,\ypos){\makebox(0,0)[r]{$#5$}}%
   \else\if r#8%
      \advance \xpos by40
      \put(\xpos,\ypos){\makebox(0,0)[l]{$#5$}}%
   \else
      \putbox(\xpos,\ypos){#5}%
   \fi\fi
\fi
}}
 
\def\topadjust[#1`#2`#3]{%
\yoff=10
\vertadjust[#1`#2`{#3}]%
\advance \yext by\tempcounta
\advance \yext by 10
}
\def\botadjust[#1`#2`#3]{%
\vertadjust[#1`#2`{#3}]%
\advance \yext by\tempcounta
\advance \yoff by-\tempcounta
}
\def\leftadjust[#1`#2`#3]{%
\xoff=0
\horadjust[#1`#2`{#3}]%
\advance \xext by\tempcounta
\advance \xoff by-\tempcounta
}
\def\rightadjust[#1`#2`#3]{%
\horadjust[#1`#2`{#3}]%
\advance \xext by\tempcounta
}
\def\rightsladjust[#1`#2`#3]{%
\sladjust[#1`#2`{#3}]{\width}%
\advance \xext by\tempcounta
}
\def\leftsladjust[#1`#2`#3]{%
\xoff=0
\sladjust[#1`#2`{#3}]{\width}%
\advance \xext by\tempcounta
\advance \xoff by-\tempcounta
}
\def\adjust[#1`#2;#3`#4;#5`#6;#7`#8]{%
\topadjust[#1``{#2}]
\leftadjust[#3``{#4}]
\rightadjust[#5``{#6}]
\botadjust[#7``{#8}]}
 
\def\putsquarep<#1>(#2)[#3;#4`#5`#6`#7]{{%
\setsqparms[#1]%
\setpos(#2)%
\settokens[#3]%
\puthmorphism(\xpos,\ypos)[\tokenc`\tokend`{#7}]{\width}{\arrowtyped}b%
\advance\ypos by \height
\puthmorphism(\xpos,\ypos)[\tokena`\tokenb`{#4}]{\width}{\arrowtypea}a%
\putvmorphism(\xpos,\ypos)[``{#5}]{\height}{\arrowtypeb}l%
\advance\xpos by \width
\putvmorphism(\xpos,\ypos)[``{#6}]{\height}{\arrowtypec}r%
}}
 
\def\putsquare{\@ifnextchar <{\putsquarep}{\putsquarep%
   <\arrowtypea`\arrowtypeb`\arrowtypec`\arrowtyped;\width`\height>}}
\def\square{\@ifnextchar< {\squarep}{\squarep
   <\arrowtypea`\arrowtypeb`\arrowtypec`\arrowtyped;\width`\height>}}
\def\squarep<#1>[#2`#3`#4`#5;#6`#7`#8`#9]{{
\setsqparms[#1]
\xext=\width                                          
\yext=\height                                         
\topadjust[#2`#3`{#6}]
\botadjust[#4`#5`{#9}]
\leftadjust[#2`#4`{#7}]
\rightadjust[#3`#5`{#8}]
\begin{picture}(\xext,\yext)(\xoff,\yoff)
\putsquarep<\arrowtypea`\arrowtypeb`\arrowtypec`\arrowtyped;\width`\height>%
(0,0)[#2`#3`#4`#5;#6`#7`#8`{#9}]%
\end{picture}%
}}
 
\def\putptrianglep<#1>(#2,#3)[#4`#5`#6;#7`#8`#9]{{%
\settriparms[#1]%
\xpos=#2 \ypos=#3
\advance\ypos by \height
\puthmorphism(\xpos,\ypos)[#4`#5`{#7}]{\height}{\arrowtypea}a%
\putvmorphism(\xpos,\ypos)[`#6`{#8}]{\height}{\arrowtypeb}l%
\advance\xpos by\height
\putmorphism(\xpos,\ypos)(-1,-1)[``{#9}]{\height}{\arrowtypec}r%
}}
 
\def\putptriangle{\@ifnextchar <{\putptrianglep}{\putptrianglep
   <\arrowtypea`\arrowtypeb`\arrowtypec;\height>}}
\def\ptriangle{\@ifnextchar <{\ptrianglep}{\ptrianglep
   <\arrowtypea`\arrowtypeb`\arrowtypec;\height>}}
 
\def\ptrianglep<#1>[#2`#3`#4;#5`#6`#7]{{
\settriparms[#1]%
\width=\height                         
\xext=\width                           
\yext=\width                           
\topadjust[#2`#3`{#5}]
\botadjust[#3``]
\leftadjust[#2`#4`{#6}]
\rightsladjust[#3`#4`{#7}]
\begin{picture}(\xext,\yext)(\xoff,\yoff)
\putptrianglep<\arrowtypea`\arrowtypeb`\arrowtypec;\height>%
(0,0)[#2`#3`#4;#5`#6`{#7}]%
\end{picture}%
}}
 
\def\putqtrianglep<#1>(#2,#3)[#4`#5`#6;#7`#8`#9]{{%
\settriparms[#1]%
\xpos=#2 \ypos=#3
\advance\ypos by\height
\puthmorphism(\xpos,\ypos)[#4`#5`{#7}]{\height}{\arrowtypea}a%
\putmorphism(\xpos,\ypos)(1,-1)[``{#8}]{\height}{\arrowtypeb}l%
\advance\xpos by\height
\putvmorphism(\xpos,\ypos)[`#6`{#9}]{\height}{\arrowtypec}r%
}}
 
\def\putqtriangle{\@ifnextchar <{\putqtrianglep}{\putqtrianglep
   <\arrowtypea`\arrowtypeb`\arrowtypec;\height>}}
\def\qtriangle{\@ifnextchar <{\qtrianglep}{\qtrianglep
   <\arrowtypea`\arrowtypeb`\arrowtypec;\height>}}
 
\def\qtrianglep<#1>[#2`#3`#4;#5`#6`#7]{{
\settriparms[#1]
\width=\height                         
\xext=\width                           
\yext=\height                          
\topadjust[#2`#3`{#5}]
\botadjust[#4``]
\leftsladjust[#2`#4`{#6}]
\rightadjust[#3`#4`{#7}]
\begin{picture}(\xext,\yext)(\xoff,\yoff)
\putqtrianglep<\arrowtypea`\arrowtypeb`\arrowtypec;\height>%
(0,0)[#2`#3`#4;#5`#6`{#7}]%
\end{picture}%
}}
 
\def\putdtrianglep<#1>(#2,#3)[#4`#5`#6;#7`#8`#9]{{%
\settriparms[#1]%
\xpos=#2 \ypos=#3
\puthmorphism(\xpos,\ypos)[#5`#6`{#9}]{\height}{\arrowtypec}b%
\advance\xpos by \height \advance\ypos by\height
\putmorphism(\xpos,\ypos)(-1,-1)[``{#7}]{\height}{\arrowtypea}l%
\putvmorphism(\xpos,\ypos)[#4``{#8}]{\height}{\arrowtypeb}r%
}}
 
\def\putdtriangle{\@ifnextchar <{\putdtrianglep}{\putdtrianglep
   <\arrowtypea`\arrowtypeb`\arrowtypec;\height>}}
\def\dtriangle{\@ifnextchar <{\dtrianglep}{\dtrianglep
   <\arrowtypea`\arrowtypeb`\arrowtypec;\height>}}
 
\def\dtrianglep<#1>[#2`#3`#4;#5`#6`#7]{{
\settriparms[#1]
\width=\height                         
\xext=\width                           
\yext=\height                          
\topadjust[#2``]
\botadjust[#3`#4`{#7}]
\leftsladjust[#3`#2`{#5}]
\rightadjust[#2`#4`{#6}]
\begin{picture}(\xext,\yext)(\xoff,\yoff)
\putdtrianglep<\arrowtypea`\arrowtypeb`\arrowtypec;\height>%
(0,0)[#2`#3`#4;#5`#6`{#7}]%
\end{picture}%
}}
 
\def\putbtrianglep<#1>(#2,#3)[#4`#5`#6;#7`#8`#9]{{%
\settriparms[#1]%
\xpos=#2 \ypos=#3
\puthmorphism(\xpos,\ypos)[#5`#6`{#9}]{\height}{\arrowtypec}b%
\advance\ypos by\height
\putmorphism(\xpos,\ypos)(1,-1)[``{#8}]{\height}{\arrowtypeb}r%
\putvmorphism(\xpos,\ypos)[#4``{#7}]{\height}{\arrowtypea}l%
}}
 
\def\putbtriangle{\@ifnextchar <{\putbtrianglep}{\putbtrianglep
   <\arrowtypea`\arrowtypeb`\arrowtypec;\height>}}
\def\btriangle{\@ifnextchar <{\btrianglep}{\btrianglep
   <\arrowtypea`\arrowtypeb`\arrowtypec;\height>}}
 
\def\btrianglep<#1>[#2`#3`#4;#5`#6`#7]{{
\settriparms[#1]
\width=\height                         
\xext=\width                           
\yext=\height                          
\topadjust[#2``]
\botadjust[#3`#4`{#7}]
\leftadjust[#2`#3`{#5}]
\rightsladjust[#4`#2`{#6}]
\begin{picture}(\xext,\yext)(\xoff,\yoff)
\putbtrianglep<\arrowtypea`\arrowtypeb`\arrowtypec;\height>%
(0,0)[#2`#3`#4;#5`#6`{#7}]%
\end{picture}%
}}
 
\def\putAtrianglep<#1>(#2,#3)[#4`#5`#6;#7`#8`#9]{{%
\settriparms[#1]%
\xpos=#2 \ypos=#3
{\multiply \height by2
\puthmorphism(\xpos,\ypos)[#5`#6`{#9}]{\height}{\arrowtypec}b}%
\advance\xpos by\height \advance\ypos by\height
\putmorphism(\xpos,\ypos)(-1,-1)[#4``{#7}]{\height}{\arrowtypea}l%
\putmorphism(\xpos,\ypos)(1,-1)[``{#8}]{\height}{\arrowtypeb}r%
}}
 
\def\putAtriangle{\@ifnextchar <{\putAtrianglep}{\putAtrianglep
   <\arrowtypea`\arrowtypeb`\arrowtypec;\height>}}
\def\Atriangle{\@ifnextchar <{\Atrianglep}{\Atrianglep
   <\arrowtypea`\arrowtypeb`\arrowtypec;\height>}}
 
\def\Atrianglep<#1>[#2`#3`#4;#5`#6`#7]{{
\settriparms[#1]
\width=\height                         
\xext=\width                           
\yext=\height                          
\topadjust[#2``]
\botadjust[#3`#4`{#7}]
\multiply \xext by2 
\leftsladjust[#3`#2`{#5}]
\rightsladjust[#4`#2`{#6}]
\begin{picture}(\xext,\yext)(\xoff,\yoff)%
\putAtrianglep<\arrowtypea`\arrowtypeb`\arrowtypec;\height>%
(0,0)[#2`#3`#4;#5`#6`{#7}]%
\end{picture}%
}}
 
\def\putAtrianglepairp<#1>(#2)[#3;#4`#5`#6`#7`#8]{{
\settripairparms[#1]%
\setpos(#2)%
\settokens[#3]%
\puthmorphism(\xpos,\ypos)[\tokenb`\tokenc`{#7}]{\height}{\arrowtyped}b%
\advance\xpos by\height
\advance\ypos by\height
\putmorphism(\xpos,\ypos)(-1,-1)[\tokena``{#4}]{\height}{\arrowtypea}l%
\putvmorphism(\xpos,\ypos)[``{#5}]{\height}{\arrowtypeb}m%
\putmorphism(\xpos,\ypos)(1,-1)[``{#6}]{\height}{\arrowtypec}r%
}}
 
\def\putAtrianglepair{\@ifnextchar <{\putAtrianglepairp}{\putAtrianglepairp%
   <\arrowtypea`\arrowtypeb`\arrowtypec`\arrowtyped`\arrowtypee;\height>}}
\def\Atrianglepair{\@ifnextchar <{\Atrianglepairp}{\Atrianglepairp%
   <\arrowtypea`\arrowtypeb`\arrowtypec`\arrowtyped`\arrowtypee;\height>}}
 
\def\Atrianglepairp<#1>[#2;#3`#4`#5`#6`#7]{{%
\settripairparms[#1]%
\settokens[#2]%
\width=\height
\xext=\width
\yext=\height
\topadjust[\tokena``]%
\vertadjust[\tokenb`\tokenc`{#6}]
\tempcountd=\tempcounta                       
\vertadjust[\tokenc`\tokend`{#7}]
\ifnum\tempcounta<\tempcountd                 
\tempcounta=\tempcountd\fi                    
\advance \yext by\tempcounta                  
\advance \yoff by-\tempcounta                 %
\multiply \xext by2 
\leftsladjust[\tokenb`\tokena`{#3}]
\rightsladjust[\tokend`\tokena`{#5}]%
\begin{picture}(\xext,\yext)(\xoff,\yoff)%
\putAtrianglepairp
<\arrowtypea`\arrowtypeb`\arrowtypec`\arrowtyped`\arrowtypee;\height>%
(0,0)[#2;#3`#4`#5`#6`{#7}]%
\end{picture}%
}}
 
\def\putVtrianglep<#1>(#2,#3)[#4`#5`#6;#7`#8`#9]{{%
\settriparms[#1]%
\xpos=#2 \ypos=#3
\advance\ypos by\height
{\multiply\height by2
\puthmorphism(\xpos,\ypos)[#4`#5`{#7}]{\height}{\arrowtypea}a}%
\putmorphism(\xpos,\ypos)(1,-1)[`#6`{#8}]{\height}{\arrowtypeb}l%
\advance\xpos by\height
\advance\xpos by\height
\putmorphism(\xpos,\ypos)(-1,-1)[``{#9}]{\height}{\arrowtypec}r%
}}

\def\putVtriangle{\@ifnextchar <{\putVtrianglep}{\putVtrianglep
   <\arrowtypea`\arrowtypeb`\arrowtypec;\height>}}
\def\Vtriangle{\@ifnextchar <{\Vtrianglep}{\Vtrianglep
   <\arrowtypea`\arrowtypeb`\arrowtypec;\height>}}
 
\def\Vtrianglep<#1>[#2`#3`#4;#5`#6`#7]{{
\settriparms[#1]
\width=\height                         
\xext=\width                           
\yext=\height                          
\topadjust[#2`#3`{#5}]
\botadjust[#4``]
\multiply \xext by2 
\leftsladjust[#2`#3`{#6}]
\rightsladjust[#3`#4`{#7}]
\begin{picture}(\xext,\yext)(\xoff,\yoff)%
\putVtrianglep<\arrowtypea`\arrowtypeb`\arrowtypec;\height>%
(0,0)[#2`#3`#4;#5`#6`{#7}]%
\end{picture}%
}}
 
\def\putVtrianglepairp<#1>(#2)[#3;#4`#5`#6`#7`#8]{{
\settripairparms[#1]%
\setpos(#2)%
\settokens[#3]%
\advance\ypos by\height
\putmorphism(\xpos,\ypos)(1,-1)[`\tokend`{#6}]{\height}{\arrowtypec}l%
\puthmorphism(\xpos,\ypos)[\tokena`\tokenb`{#4}]{\height}{\arrowtypea}a%
\advance\xpos by\height
\putvmorphism(\xpos,\ypos)[``{#7}]{\height}{\arrowtyped}m%
\advance\xpos by\height
\putmorphism(\xpos,\ypos)(-1,-1)[``{#8}]{\height}{\arrowtypee}r%
}}
 
\def\putVtrianglepair{\@ifnextchar <{\putVtrianglepairp}{\putVtrianglepairp%
    <\arrowtypea`\arrowtypeb`\arrowtypec`\arrowtyped`\arrowtypee;\height>}}
\def\Vtrianglepair{\@ifnextchar <{\Vtrianglepairp}{\Vtrianglepairp%
    <\arrowtypea`\arrowtypeb`\arrowtypec`\arrowtyped`\arrowtypee;\height>}}
 
\def\Vtrianglepairp<#1>[#2;#3`#4`#5`#6`#7]{{%
\settripairparms[#1]%
\settokens[#2]
\xext=\height                  
\width=\height                 
\yext=\height                  
\vertadjust[\tokena`\tokenb`{#4}]
\tempcountd=\tempcounta        
\vertadjust[\tokenb`\tokenc`{#5}]
\ifnum\tempcounta<\tempcountd%
\tempcounta=\tempcountd\fi
\advance \yext by\tempcounta
\botadjust[\tokend``]%
\multiply \xext by2
\leftsladjust[\tokena`\tokend`{#6}]%
\rightsladjust[\tokenc`\tokend`{#7}]%
\begin{picture}(\xext,\yext)(\xoff,\yoff)%
\putVtrianglepairp
<\arrowtypea`\arrowtypeb`\arrowtypec`\arrowtyped`\arrowtypee;\height>%
(0,0)[#2;#3`#4`#5`#6`{#7}]%
\end{picture}%
}}

\def\putCtrianglep<#1>(#2,#3)[#4`#5`#6;#7`#8`#9]{{%
\settriparms[#1]%
\xpos=#2 \ypos=#3
\advance\ypos by\height
\putmorphism(\xpos,\ypos)(1,-1)[``{#9}]{\height}{\arrowtypec}l%
\advance\xpos by\height
\advance\ypos by\height
\putmorphism(\xpos,\ypos)(-1,-1)[#4`#5`{#7}]{\height}{\arrowtypea}l%
{\multiply\height by 2
\putvmorphism(\xpos,\ypos)[`#6`{#8}]{\height}{\arrowtypeb}r}%
}}
 
\def\putCtriangle{\@ifnextchar <{\putCtrianglep}{\putCtrianglep
    <\arrowtypea`\arrowtypeb`\arrowtypec;\height>}}
\def\Ctriangle{\@ifnextchar <{\Ctrianglep}{\Ctrianglep
    <\arrowtypea`\arrowtypeb`\arrowtypec;\height>}}
 
\def\Ctrianglep<#1>[#2`#3`#4;#5`#6`#7]{{
\settriparms[#1]
\width=\height                          
\xext=\width                            
\yext=\height                           
\multiply \yext by2 
\topadjust[#2``]
\botadjust[#4``]
\sladjust[#3`#2`{#5}]{\width}
\tempcountd=\tempcounta                 
\sladjust[#3`#4`{#7}]{\width}
\ifnum \tempcounta<\tempcountd          
\tempcounta=\tempcountd\fi              
\advance \xext by\tempcounta            
\advance \xoff by-\tempcounta           %
\rightadjust[#2`#4`{#6}]
\begin{picture}(\xext,\yext)(\xoff,\yoff)%
\putCtrianglep<\arrowtypea`\arrowtypeb`\arrowtypec;\height>%
(0,0)[#2`#3`#4;#5`#6`{#7}]%
\end{picture}%
}}
 
\def\putDtrianglep<#1>(#2,#3)[#4`#5`#6;#7`#8`#9]{{%
\settriparms[#1]%
\xpos=#2 \ypos=#3
\advance\xpos by\height \advance\ypos by\height
\putmorphism(\xpos,\ypos)(-1,-1)[``{#9}]{\height}{\arrowtypec}r%
\advance\xpos by-\height \advance\ypos by\height
\putmorphism(\xpos,\ypos)(1,-1)[`#5`{#8}]{\height}{\arrowtypeb}r%
{\multiply\height by 2
\putvmorphism(\xpos,\ypos)[#4`#6`{#7}]{\height}{\arrowtypea}l}%
}}
 
\def\putDtriangle{\@ifnextchar <{\putDtrianglep}{\putDtrianglep
    <\arrowtypea`\arrowtypeb`\arrowtypec;\height>}}
\def\Dtriangle{\@ifnextchar <{\Dtrianglep}{\Dtrianglep
   <\arrowtypea`\arrowtypeb`\arrowtypec;\height>}}
 
\def\Dtrianglep<#1>[#2`#3`#4;#5`#6`#7]{{
\settriparms[#1]
\width=\height                         
\xext=\height                          
\yext=\height                          
\multiply \yext by2 
\topadjust[#2``]
\botadjust[#4``]
\leftadjust[#2`#4`{#5}]
\sladjust[#3`#2`{#5}]{\height}
\tempcountd=\tempcountd                
\sladjust[#3`#4`{#7}]{\height}
\ifnum \tempcounta<\tempcountd         
\tempcounta=\tempcountd\fi             
\advance \xext by\tempcounta           %
\begin{picture}(\xext,\yext)(\xoff,\yoff)
\putDtrianglep<\arrowtypea`\arrowtypeb`\arrowtypec;\height>%
(0,0)[#2`#3`#4;#5`#6`{#7}]%
\end{picture}%
}}
 
\def\setrecparms[#1`#2]{\width=#1 \height=#2}%
%
 
\def\recursep<#1`#2>[#3;#4`#5`#6`#7`#8]{{%
\width=#1 \height=#2
\settokens[#3]
\settowidth{\tempdimen}{$\tokena$}
\ifdim\tempdimen=0pt
  \savebox{\tempboxa}{\hbox{$\tokenb$}}%
  \savebox{\tempboxb}{\hbox{$\tokend$}}%
  \savebox{\tempboxc}{\hbox{$#6$}}%
\else
  \savebox{\tempboxa}{\hbox{$\hbox{$\tokena$}\times\hbox{$\tokenb$}$}}%
  \savebox{\tempboxb}{\hbox{$\hbox{$\tokena$}\times\hbox{$\tokend$}$}}%
  \savebox{\tempboxc}{\hbox{$\hbox{$\tokena$}\times\hbox{$#6$}$}}%
\fi
\ypos=\height
\divide\ypos by 2
\xpos=\ypos
\advance\xpos by \width
\xext=\xpos \yext=\height
\topadjust[#3`\usebox{\tempboxa}`{#4}]%
\botadjust[#5`\usebox{\tempboxb}`{#8}]%
\sladjust[\tokenc`\tokenb`{#5}]{\ypos}%
\tempcountd=\tempcounta
\sladjust[\tokenc`\tokend`{#5}]{\ypos}%
\ifnum \tempcounta<\tempcountd
\tempcounta=\tempcountd\fi
\advance \xext by\tempcounta
\advance \xoff by-\tempcounta
\rightadjust[\usebox{\tempboxa}`\usebox{\tempboxb}`\usebox{\tempboxc}]%
\bfig
\putCtrianglep<-1`1`1;\ypos>(0,0)[`\tokenc`;#5`#6`{#7}]%
\puthmorphism(\ypos,0)[\tokend`\usebox{\tempboxb}`{#8}]{\width}{-1}b%
\puthmorphism(\ypos,\height)[\tokenb`\usebox{\tempboxa}`{#4}]{\width}{-1}a%
\advance\ypos by \width
\putvmorphism(\ypos,\height)[``\usebox{\tempboxc}]{\height}1r%
\efig
}}
 
\def\recurse{\@ifnextchar <{\recursep}{\recursep<\width`\height>}}
 
\def\puttwohmorphisms(#1,#2)[#3`#4;#5`#6]#7#8#9{{%
%
\puthmorphism(#1,#2)[#3`#4`]{#7}0a
\ypos=#2
\advance\ypos by 20
\puthmorphism(#1,\ypos)[\phantom{#3}`\phantom{#4}`#5]{#7}{#8}a
\advance\ypos by -40
\puthmorphism(#1,\ypos)[\phantom{#3}`\phantom{#4}`#6]{#7}{#9}b
}}
 
\def\puttwovmorphisms(#1,#2)[#3`#4;#5`#6]#7#8#9{{%
%
%
%
\putvmorphism(#1,#2)[#3`#4`]{#7}0a
\xpos=#1
\advance\xpos by -20
\putvmorphism(\xpos,#2)[\phantom{#3}`\phantom{#4}`#5]{#7}{#8}l
\advance\xpos by 40
\putvmorphism(\xpos,#2)[\phantom{#3}`\phantom{#4}`#6]{#7}{#9}r
}}
 
\def\puthcoequalizer(#1)[#2`#3`#4;#5`#6`#7]#8#9{{%
%
\setpos(#1)%
\puttwohmorphisms(\xpos,\ypos)[#2`#3;#5`#6]{#8}11%
\advance\xpos by #8
\puthmorphism(\xpos,\ypos)[\phantom{#3}`#4`#7]{#8}1{#9}
}}
 
\def\putvcoequalizer(#1)[#2`#3`#4;#5`#6`#7]#8#9{{%
%
%
%
%
\setpos(#1)%
\puttwovmorphisms(\xpos,\ypos)[#2`#3;#5`#6]{#8}11%
\advance\ypos by -#8
\putvmorphism(\xpos,\ypos)[\phantom{#3}`#4`#7]{#8}1{#9}
}}
 
\def\putthreehmorphisms(#1)[#2`#3;#4`#5`#6]#7(#8)#9{{%
\setpos(#1) \settypes(#8)
\if a#9 %
     \vertsize{\tempcounta}{#5}%
     \vertsize{\tempcountb}{#6}%
     \ifnum \tempcounta<\tempcountb \tempcounta=\tempcountb \fi
\else
     \vertsize{\tempcounta}{#4}%
     \vertsize{\tempcountb}{#5}%
     \ifnum \tempcounta<\tempcountb \tempcounta=\tempcountb \fi
\fi
\advance \tempcounta by 60
\puthmorphism(\xpos,\ypos)[#2`#3`#5]{#7}{\arrowtypeb}{#9}
\advance\ypos by \tempcounta
\puthmorphism(\xpos,\ypos)[\phantom{#2}`\phantom{#3}`#4]{#7}{\arrowtypea}{#9}
\advance\ypos by -\tempcounta \advance\ypos by -\tempcounta
\puthmorphism(\xpos,\ypos)[\phantom{#2}`\phantom{#3}`#6]{#7}{\arrowtypec}{#9}
}}
 
\def\putarc(#1,#2)[#3`#4`#5]#6#7#8{{%
\xpos #1
\ypos #2
\width #6
\arrowlength #6
\putbox(\xpos,\ypos){#3\vphantom{#4}}%
{\advance \xpos by\arrowlength
\putbox(\xpos,\ypos){\vphantom{#3}#4}}%
\horsize{\tempcounta}{#3}%
\horsize{\tempcountb}{#4}%
\divide \tempcounta by2
\divide \tempcountb by2
\advance \tempcounta by30
\advance \tempcountb by30
\advance \xpos by\tempcounta
\advance \arrowlength by-\tempcounta
\advance \arrowlength by-\tempcountb
\halflength=\arrowlength \divide\halflength by 2
\divide\arrowlength by 5
\put(\xpos,\ypos){\bezier{\arrowlength}(0,0)(50,50)(\halflength,50)}
\ifnum #7=-1 \put(\xpos,\ypos){\vector(-3,-2)0} \fi
\advance\xpos by \halflength
\put(\xpos,\ypos){\xpos=\halflength \advance\xpos by -50
   \bezier{\arrowlength}(0,50)(\xpos,50)(\halflength,0)}
\ifnum #7=1 {\advance \xpos by
   \halflength \put(\xpos,\ypos){\vector(3,-2)0}} \fi
\advance\ypos by 50
\vertsize{\tempcounta}{#5}%
\divide\tempcounta by2
\advance \tempcounta by20
\if a#8 %
   \advance \ypos by\tempcounta
   \putbox(\xpos,\ypos){#5}%
\else
   \advance \ypos by-\tempcounta
   \putbox(\xpos,\ypos){#5}%
\fi
}}
 
\makeatother

\hoffset-1.4cm
\voffset-2cm
\parskip 2ex
\textheight24cm
\textwidth16cm

\newtheorem{theorem}{Satz}

\newtheorem{corollary}[theorem]{Folgerung}

\newtheorem{beispiel}{Beispiel}

\newcommand{\moo}{{M^{\circ\circ}}}

\newcommand{\py}{{\mathfrak{p}}}
\newcommand{\my}{{\mathfrak{m}}}
\newcommand{\qy}{{\mathfrak{q}}}

\newcommand{\cC}{{\mathcal {C}}}
\newcommand{\cL}{{\mathcal {L}}}

\newenvironment{proof}{{\it Beweis}. $\;\;$}{\hspace*{\fill} $\Box$}

\begin{document}

\title{\Large\bf Eine Charakterisierung der Matlis-reflexiven Moduln}

\author{Helmut Z"oschinger\\ 
Mathematisches Institut der Universit"at M"unchen\\
Theresienstr. 39, D-80333 M"unchen, Germany\\
E-mail: zoeschinger@mathematik.uni-muenchen.de}

\date{}
\maketitle

\vspace{1cm}

\begin{center}  
{\bf Abstract}
\end{center}

Let $(R,\my)$ be a noetherian local ring, $E$ the injective hull of $k=R/\my$ and $M^\circ=$ Hom$_R(M,E)$ the Matlis dual of the $R$-module $M$. If the canonical monomorphism $\varphi: M \to \moo$ is surjective, $M$ is known to be called (Matlis-)reflexive. With the help of the Bass numbers $\mu(\py,M)=\dim_{\kappa(\py)}($Hom$_R(R/\py,M)_\py)$ of $M$ with respect to $\py$ we show: $M$ is reflexive if and only if $\mu(\py,M)=\mu(\py,\moo)$  for all $\py \in $ Spec$(R)$.
From this it follows for every $R$-module $M$: If there exists a monomorphism $\moo \hookrightarrow M$ or an epimorphism $M \twoheadrightarrow \moo$, then $M$ is already reflexive.

\vspace{0.5cm}

{\noindent{\it Key Words:} Matlis-reflexive modules, Bass numbers, associated prime ideals, torsion modules, cotorsion modules.}

\vspace{0.5cm}

{\noindent{\it Mathematics Subject Classification (2010):} 13B35, 13C11, 13E10.}


\section{Der Rang eines Moduls}

Stets sei $R$ noethersch und lokal. F"ur jeden $R$-Modul $M$ hei"st $\mu^i(\py,M):=\dim_{\kappa(\py)}($Ext$_R^i(R/\py,\\M)_\py)\;\;(i\geq 0,\;\py \in $ Spec$(R))$ die  $i$-te {\it Bass-Zahl} von $M$ bez"uglich $\py\;$ (siehe \cite[p.200]{1}), und weil wir sie im folgenden nur f"ur $i=0$ brauchen, schreiben wir statt $\mu^0(\py,M)$ kurz $\mu(\py,M)$.
Ist $R$ ein Integrit"atsring mit Quotientenk"orper $K$, so ist $\mu(0,M)=\dim_K(K\otimes_R M)$ offenbar der {\it Rang} von $M$.
Ist $R$ beliebig, wird $M[\py]:=$ Ann$_M(\py)$ zu einem Modul "uber dem Integrit"atsring $R/\py$ und es folgt $\mu(\py,M)=$ Rang$_{R/\py}(M[\py])$.
Insbesondere ist $\mu(\py,M)=0$ "aquivalent mit $\py \notin $ Ass$(M)$.
Allgemeiner gilt mit einem gro"sen Untermodul $U$ von $M,\;\,U\cong \coprod (R/\qy)^{(I_\qy)}(\qy \in $ Spec$(R)),$ da"s $\mu(\py,M)=\mu(\py,U)=\,|I_\py|$ ist.

\begin{theorem}
Ist $M$ ein $R$-Modul und $\varphi:M \to \moo$ die kanonische Einbettung, so gilt f"ur jedes Primideal $\py$:
$$\mu(\py,M)\;=\;\mu(\py,\moo)\;\Longleftrightarrow\; \py \notin \mbox{ Ass}(\mbox{Kok }  \varphi).$$
\end{theorem}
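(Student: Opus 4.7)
The plan is to reduce the equivalence to the additivity formula
$$\mu(\py,\moo)\;=\;\mu(\py,M)\,+\,\mu(\py,C),\qquad C:=\mbox{Kok}\,\varphi.$$
Granted this, the claim is immediate: by the fact recorded in the preamble that $\mu(\py,X)=0\Leftrightarrow\py\notin\mbox{Ass}(X)$, the Bass numbers of $M$ and $\moo$ agree exactly when the contribution from $C$ vanishes, i.e.\ exactly when $\py\notin\mbox{Ass}(C)$.

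The crucial ingredient is that the short exact sequence $0\to M\to\moo\to C\to 0$ is \emph{rein} (pure). To establish this, I would dualize: applying $(-)^\circ=\mbox{Hom}_R(-,E)$ produces the short exact sequence $0\to C^\circ\to M^{\circ\circ\circ}\to M^\circ\to 0$, and this latter sequence splits because the canonical map $\varphi_{M^\circ}:M^\circ\to M^{\circ\circ\circ}$ is a right inverse of $(\varphi_M)^\circ$ via the standard naturality identity $(\varphi_M)^\circ\circ\varphi_{M^\circ}=\mathrm{id}_{M^\circ}$. Since $E$ is an injective cogenerator, the splitting of the dualized sequence is equivalent to the purity of the original.

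With purity in hand, $\mbox{Hom}_R(F,-)$ carries the sequence to a short exact sequence for every finitely presented module $F$. Taking $F=R/\py$ (finitely presented because $R$ is noetherian) and localizing at $\py$ (an exact functor) yields
$$0\to\mbox{Hom}_R(R/\py,M)_\py\to\mbox{Hom}_R(R/\py,\moo)_\py\to\mbox{Hom}_R(R/\py,C)_\py\to 0.$$
Reading off $\kappa(\py)$-dimensions gives the desired additivity formula. The only real obstacle is the purity statement, which hinges on the splitting of the dualized sequence; once that is clear, the remainder of the argument is routine dimension counting in the resulting short exact sequence of $\kappa(\py)$-vector spaces.
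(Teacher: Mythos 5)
Your purity argument is correct and does establish the additivity formula $\mu(\py,\moo)=\mu(\py,M)+\mu(\py,C)$ as an equation of cardinal numbers: the dual sequence splits because of the triangle identity $(\varphi_M)^\circ\circ\varphi_{M^\circ}=\mathrm{id}_{M^\circ}$, splitting of the Matlis dual is equivalent to purity, and purity makes $\mbox{Hom}_R(R/\py,-)$ exact on the sequence. This immediately gives the direction ``$\py\notin\mbox{Ass}(\mbox{Kok}\,\varphi)\Rightarrow\mu(\py,M)=\mu(\py,\moo)$''. The genuine gap is in the converse, in the sentence ``the Bass numbers of $M$ and $\moo$ agree exactly when the contribution from $C$ vanishes''. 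Cardinal addition is not cancellative: if $\mu(\py,M)$ is an infinite cardinal and $0<\mu(\py,C)\leq\mu(\py,M)$, then $\mu(\py,M)+\mu(\py,C)=\mu(\py,M)$, so the two Bass numbers agree even though $\py\in\mbox{Ass}(C)$. Nothing in your argument excludes this, and the theorem is only true because this situation in fact never occurs --- a substantive statement that is not a formal consequence of additivity.

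This is precisely what the paper's first step is for. After reducing to the case of a domain by passing to $M[\py]$ over $R/\py$ (its third step), the paper shows that the equality $\mbox{Rang}(M)=\mbox{Rang}(\moo)$ already forces the rank to be \emph{finite}: one chooses a free submodule $U\cong R^{(I)}$ of $M$ with $M/U$ torsion, observes that $\hat{R}^I\to U^{\circ\circ}$ is a split monomorphism, and then uses a flatness argument together with the Erd\H{o}s--Kaplansky theorem to conclude that $\mbox{Rang}(R^{(I)})=\mbox{Rang}(R^I)$, i.e.\ $\dim_k(k^{(I)})=\dim_k(k^I)$, forces $I$ to be finite. Only after this finiteness step does the additivity formula yield $\mbox{Rang}(\mbox{Kok}\,\varphi)=0$. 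Compare Bemerkung 1.4(3) of the paper: for $M=E^{(I)}$ with $I$ infinite one needs the \emph{strict} inequality $\mbox{Rang}(M[\py])<\mbox{Rang}(M[\py]^{\Box\,\Box})$, which is exactly the point your cancellation step silently assumes. To repair your proof you must add a finiteness argument of this kind; the purity observation, though it is a clean substitute for the commutative diagram the paper imports from the literature, does not by itself supply it.
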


\begin{proof}
{\it 1.Schritt}\quad
Sei $R$ ein Integrit"atsring und $M$ ein $R$-Modul mit Rang$(M)= $ Rang$(\moo)$.
Dann ist Rang$(M)$ endlich. Zum Beweis w"ahle man einen
freien Untermodul $U$ von $M$ mit $M/U$ torsion, so da"s aus Rang$(U)=$ Rang$(M)$ auch Rang$(U)=$ Rang$(U^{\circ\circ})$ folgt.
Mit $U\cong R^{(I)}$ ist $E^{(I)} \to U^\circ$, also auch $\hat{R}^I \to U^{\circ\circ}$ ein zerfallender Monomorphismus, insbesondere Rang$(R^{(I)})=$ Rang$(R^I)$.

Daraus folgt die Endlichkeit von $I$: Mit $S = R^{(I)}$ und $P=R^I$ ist auch $P/S$ flach, so da"s es nach \cite[Theorem 7.10]{2} einen freien Zwischenmodul $S\subset F \subset P$ gibt mit $F$ rein in $P,\;\,F+\my P=P.$
Klar ist Rang$(S)=\dim_k(k^{(I)})$, andererseits Rang$(F)=\dim_k(F/\my F)\,= \dim_k(P/\my P)\,=\dim_k(k^I)$, so da"s aus Rang$(S)=$ Rang$(F)$ mit Erd"os-Kaplansky die Behauptung folgt.

{\it 2.Schritt} \quad
Sei $R$ ein Integrit"atsring und $M$ ein beliebiger $R$-Modul. Dann gilt:
$$\mbox{Rang}(M)\;=\;\mbox{Rang}(\moo)\;\Longleftrightarrow\; \mbox{Kok } \varphi \mbox{ ist torsion}.$$
Der Beweis folgt unmittelbar aus der Gleichung Rang$(\moo)=$ Rang$(M)+$ Rang(Kok $\varphi)$, denn bei ''$\Rightarrow$'' sind nach dem ersten Schritt alle Kardinalzahlen endlich, also Rang(Kok $ \varphi)=0,$
und ''$\Leftarrow$'' ist klar.

{\it 3.Schritt} \quad Sind jetzt $R$ und $M$ beliebig, gibt es zu jedem Primideal $\py$ nach \cite[Lemma 1.4]{5} ein kommutatives Diagramm

\vspace{0.5cm}

$$ 0\;\longrightarrow\;M[\py]\;\longrightarrow \;M[\py]^{\Box\,\Box}\;\longrightarrow\;\mbox{Kok} (\varphi_{M[\py]})\;\longrightarrow \;0$$

\hspace{4.2cm} $\parallel$ \hspace{1.6cm} $\mu\;\downarrow$\hspace{2.0cm} $\nu\;\downarrow$
$$0\;\longrightarrow\;M[\py]\;\longrightarrow\;(\moo)[\py]\;\longrightarrow \;(\mbox{Kok }\varphi)[\py]\;\longrightarrow \;0$$

\vspace{0.5cm}

\noindent{mit exakten Zeilen, in dem $\mu$, also auch $\nu$ ein Isomorphismus ist (wobei $\Box$ das Matlis-Duale "uber dem lokalen Ring $R/\py$ sei). Genau dann ist jetzt $\mu(\py,M)=\mu(\py,\moo)$, wenn $M[\py]$ und $M[\py]^{\Box\,\Box}$ denselben Rang "uber $R/\py$ haben, d.h. nach dem zweiten Schritt Kok$(\varphi_{M[\py]})$ "uber $R/\py$ torsion ist, d.h. (Kok $\varphi)[\py]$ "uber $R/\py$ torsion ist, d.h. $\py \notin $ Ass(Kok $\varphi).$}
\end{proof}

\begin{corollary}
Genau dann ist $M$ reflexiv, wenn $\mu(\py,M)=\mu(\py,\moo)$ gilt f"ur alle Primideale $\py$.
\end{corollary}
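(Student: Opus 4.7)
The plan is to read off the corollary directly from Satz 1.1 by translating the condition ``$\mu(\py,M)=\mu(\py,\moo)$ for all $\py$'' into a statement about $\mbox{Kok}\,\varphi$ and then using the standard noetherian fact that a module is zero if and only if it has no associated primes.

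First I would recall that, by definition, $M$ is Matlis-reflexive precisely when the canonical monomorphism $\varphi:M\to\moo$ is surjective, i.e., when $\mbox{Kok}\,\varphi=0$. This is the target condition to match.

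Next I would apply Satz 1.1 prime by prime: the equality $\mu(\py,M)=\mu(\py,\moo)$ holds for every $\py\in\mbox{Spec}(R)$ if and only if $\py\notin\mbox{Ass}(\mbox{Kok}\,\varphi)$ for every $\py\in\mbox{Spec}(R)$, which in turn means $\mbox{Ass}(\mbox{Kok}\,\varphi)=\emptyset$. Since $R$ is noetherian, every nonzero $R$-module admits an associated prime, so $\mbox{Ass}(\mbox{Kok}\,\varphi)=\emptyset$ is equivalent to $\mbox{Kok}\,\varphi=0$, i.e.\ to reflexivity of $M$. Chaining these equivalences yields the corollary.

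There is essentially no obstacle: all the work is done in Satz 1.1, and the only additional ingredient is the noetherian principle ``$\mbox{Ass}(N)=\emptyset \Leftrightarrow N=0$'', which is standard. The proof is therefore a one-line deduction and does not require any further constructions or case distinctions.
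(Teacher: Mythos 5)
Your proposal is correct and matches the paper's own (very terse) argument: the paper likewise notes that by Satz 1.1 the condition on all Bass numbers is equivalent to $\mbox{Ass}(\mbox{Kok}\,\varphi)=\emptyset$, hence to $\mbox{Kok}\,\varphi=0$, i.e.\ to reflexivity. No gaps; your explicit mention of the noetherian fact that a nonzero module has an associated prime is exactly the implicit ingredient the paper relies on.
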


\begin{corollary}
Gibt es einen Monomorphismus $\moo \hookrightarrow M$ oder einen Epimorphismus $M\twoheadrightarrow \moo$, so ist $M$ bereits reflexiv.
\end{corollary}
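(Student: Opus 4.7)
The plan is to verify in both cases the Bass-number criterion of Folgerung 1.2, namely $\mu(\py,M)=\mu(\py,\moo)$ for every prime $\py$. Since the canonical injection $\varphi_M:M\hookrightarrow\moo$ restricts to an injection $M[\py]\hookrightarrow(\moo)[\py]$ of $R/\py$-Moduln and the $R/\py$-rank is monotone under submodules (the fraction field of $R/\py$ being flat), the inequality $\mu(\py,M)\le\mu(\py,\moo)$ is automatic; only the reverse inequality requires the hypothesis. In the monomorphism case $\iota:\moo\hookrightarrow M$, the same monotonicity applied to the restriction $(\moo)[\py]\hookrightarrow M[\py]$ yields $\mu(\py,\moo)\le\mu(\py,M)$; Schr\"oder--Bernstein for cardinals then gives equality, and Folgerung 1.2 concludes.

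The epimorphism case $\pi:M\twoheadrightarrow\moo$ I would reduce to the monomorphism case by Matlis duality. Since $(-)^\circ$ is exact, $\pi^\circ:(M^\circ)^{\circ\circ}\hookrightarrow M^\circ$ is a monomorphism of exactly the form just handled; applying the monomorphism case to the module $M^\circ$ shows that $M^\circ$ is already reflexive, i.e.\ $\varphi_{M^\circ}$ is an isomorphism. Now dualize the canonical sequence $0\to M\to\moo\to C\to 0$, with $C=\mbox{Kok }\varphi_M$, to obtain $0\to C^\circ\to M^{\circ\circ\circ}\stackrel{\varphi_M^\circ}{\longrightarrow}M^\circ\to 0$. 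The universal identity $\varphi_M^\circ\circ\varphi_{M^\circ}=\mbox{id}_{M^\circ}$ forces $\varphi_M^\circ$ to be the inverse of the already iso $\varphi_{M^\circ}$, so $\varphi_M^\circ$ is an isomorphism and $C^\circ=0$.

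The final and most delicate step is the passage from $C^\circ=0$ to $C=0$. If $0\ne c\in C$ has annihilator $I\subsetneq R$, then $I\subseteq\my$ and hence $E[I]\supseteq E[\my]=k\ne 0$; any non-zero homomorphism $R/I\cong Rc\to E$ extends, by the injectivity of $E$, to a non-zero $C\to E$, contradicting $C^\circ=0$. Hence $C=0$, $\varphi_M$ is surjective, and $M$ is reflexive. I expect this cogenerator argument to be the principal obstacle: over general rings the functor $(-)^\circ$ may annihilate non-zero modules, but over the local noetherian $R$ it cannot, precisely because every proper ideal lies in $\my$ and so meets the socle of $E$ non-trivially.
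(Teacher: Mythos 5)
Your proof is correct and follows essentially the same route as the paper: the monomorphism case via the two Bass-number inequalities combined with Folgerung 1.2, and the epimorphism case by dualizing to a monomorphism $M^{\circ\circ\circ}\hookrightarrow M^\circ$ and invoking the first case for $M^\circ$. The only difference is that you spell out the final implication ($M^\circ$ reflexiv $\Rightarrow$ $M$ reflexiv) via the triangle identity and the cogenerator property of $E$, which the paper simply asserts with ``also auch $M$''.
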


\begin{proof}
Bei der ersten Folgerung ist Kok $\varphi =0$ "aquivalent mit der rechten Seite. Bei der zweiten gilt f"ur jeden Monomorphismus $f: X \hookrightarrow M$, da"s $\mu(\py,X)\leq \mu(\py,M)$ ist f"ur alle $\py$, bei $X=\moo$ also 
$\mu(\py,\moo)=\mu(\py,M)$ wie gew"unscht.
Ist aber $g: M \twoheadrightarrow \moo$ ein Epimorphismus, wird $g^\circ: M^{\circ\circ\circ} \to M^\circ$ ein Monomorphismus, so da"s nach eben $M^\circ$ reflexiv ist, also auch $M$.
\end{proof}

\noindent{{\bf Bemerkung 1.4}\quad
In einigen Spezialf"allen l"a"st sich sofort entscheiden, wann ein Primideal $\py$ die Bedingung im Satz erf"ullt:\\}
(1) F"ur alle $\py \notin $ Ass$(\moo)$ gilt $\mu(\py,M)=\mu(\py,\moo)\;$ (denn beide Seiten sind Null).\\
(2) Ist $M=E^n$ mit $n\geq 1$, gilt $\mu(\py,M)=\mu(\py,\moo)$ genau dann, wenn $R/\py$ vollst"andig ist (denn die Reflexivit"at von $M[\my]$ liefert Ass(Kok $\varphi)= $ Ass$(\moo)\backslash \{\my\}$, und bekanntlich ist Ass($\moo)
= $ Koass$(\hat{R})\,=\{\my\} \stackrel{\cdot}{\cup} \{\py \in $ Spec$(R) \,|\,R/\py$ ist unvollst"andig\}).\\
(3) Ist $M=E^{(I)}$ mit $I$ unendlich, gibt es "uberhaupt kein Primideal $\py$ mit $\mu(\py,M)=\mu(\py,\moo)\;$ (denn auch $M[\py]$ ist nicht reflexiv, also nach Beispiel 1 im 2.Abschnitt Rang$(M[\py])\\<$ Rang$(M[\py]^{\Box\,\Box})$ "uber $R/\py$).

\section{Starke (Ko--)Torsionsmoduln}
\setcounter{theorem}{0}

Ist $R$ ein Integrit"atsring, so ist die Klasse $\cC$ aller $R$-Moduln $M$ mit Rang$(M)=$ Rang$(\moo)$ nach dem 2.Schritt in (1.1) gegen"uber Untermoduln, Faktormoduln und Gruppenerweiterungen abgeschlossen.
Ein torsionsfreier $R$-Modul $M$ geh"ort genau dann zu $\cC$, wenn Kok $\varphi=0$, d.h. $M$ reflexiv ist ---
siehe die genauere Beschreibung in (2.3).
Ein Torsionsmodul $M$ geh"ort genau dann zu $\cC$, wenn $\moo$ torsion ist --- und daf"ur haben wir keine explizite Beschreibung.
Nur wenn $\moo$ sogar {\it stark torsion}, d.h. $\bigcap $ Ass$(\moo)\neq 0$ ist (siehe \cite[p.126]{5}, k"onnen wir die Struktur von $M$ angeben:

\begin{theorem}
Ist $R$ ein Integrit"atsring, kein K"orper, so sind f"ur einen $R$-Modul $M$ "aquivalent:
\begin{enumerate}
\item[(i)]
$\moo$ ist stark torsion.
\item[(ii)]
$D(M)$ ist artinsch und reflexiv, $M/D(M)$ ist beschr"ankt.
\end{enumerate}
\end{theorem}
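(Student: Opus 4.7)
I would prove the equivalence by establishing the two implications separately; $(ii) \Rightarrow (i)$ is a direct Matlis-dual computation, while $(i) \Rightarrow (ii)$ forms the structural core of the theorem.

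For $(ii) \Rightarrow (i)$ I would apply $(-)^\circ$ twice to $0 \to D(M) \to M \to M/D(M) \to 0$, using the injectivity of $E$ and the reflexivity of $D(M)$, to obtain the short exact sequence $0 \to D(M) \to \moo \to (M/D(M))^{\circ\circ} \to 0$. Since $M/D(M)$ is annihilated by some $0 \neq r \in R$, so is $(M/D(M))^{\circ\circ}$. Combined with Ass$(D(M)) \subseteq \{\my\}$ (because $D(M)$ is artinian), the standard inclusion Ass$(\moo) \subseteq $ Ass$(D(M)) \cup $ Ass$((M/D(M))^{\circ\circ})$ shows that every $\py \in $ Ass$(\moo)$ either equals $\my$ or contains $r$. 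Choosing $0 \neq s \in \my$, which exists since $R$ is not a field, the nonzero product $rs$ lies in $\bigcap $ Ass$(\moo)$, so $\moo$ is strongly torsion.

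For $(i) \Rightarrow (ii)$ I would fix $0 \neq r \in \bigcap $ Ass$(\moo)$ and proceed in three steps. First, for every finitely generated submodule $N \subseteq \moo$ one has Ass$(N) \subseteq $ Ass$(\moo)$, and $\bigcap $ Ass$(N) = \sqrt{\mbox{Ann}(N)}$ for such noetherian $N$, so $r^n N = 0$ for some $n$; hence $r$ acts locally nilpotently on $\moo$ and in particular on $M$. Second, I would try to show that $\moo/D(\moo)$ is \emph{bounded}, i.e.\ killed by a uniform power $r^k$; pulled back along the embedding $M \hookrightarrow \moo$ this would yield the boundedness of $M/D(M)$, since $D(M) = M \cap D(\moo)$. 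Third, I would identify $D(M)$ with $D(\moo)$ and show the latter is artinian and reflexive, realizing it as a finite direct sum of copies of $E$ via Matlis duality applied to its dual.

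The main obstacle is clearly the second step: upgrading the pointwise $r$-nilpotence on $\moo$ to a uniform bound on $\moo/D(\moo)$. I expect the decisive input to be a Matlis-dual reformulation, namely that $\moo = \bigcup_n \moo[r^n]$ corresponds to $M^\circ$ being $r$-adically separated over $\hat R$; a Nakayama-style argument applied to $M^\circ/rM^\circ$ should then give finite generation on the dual side, which both bounds $\moo/D(\moo)$ uniformly and forces $D(\moo)$ to be a finite sum of copies of $E$. The Bass-number criterion of Section~1 would then confirm the resulting reflexivity of $D(M)$.
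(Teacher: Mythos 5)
Your direction $(ii)\Rightarrow(i)$ is correct and is essentially the paper's own argument: dualize $0\to D(M)\to M\to M/D(M)\to 0$ twice, use the reflexivity of $D(M)$ and its artinian-ness to see that every $\py\in\mbox{Ass}(\moo)$ either equals $\my$ or contains $r$; the paper simply takes $r\in\my$ from the start, which amounts to your multiplication by $s$.

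The direction $(i)\Rightarrow(ii)$, however, has a genuine gap, and it sits exactly where you locate your ``main obstacle''. Your Step 1 (local nilpotence of $r$ on $\moo$) is fine, but local nilpotence alone cannot yield the uniform statement you need: a module such as $\coprod_n R/r^nR$ is locally $r$-nilpotent, has trivial divisible part, and is not bounded. The entire content of the implication is that a double dual cannot behave like this, and your sketch of how to prove that ($r$-adic separatedness of $M^\circ$ plus ``a Nakayama-style argument'' giving finite generation) is not an argument: it does not say which module becomes finitely generated or why. The paper closes precisely this gap by working on the dual side from the outset: from $\mbox{Ass}(\moo)=\mbox{Koass}(M^\circ)$ it concludes that $M^\circ$ is strongly cotorsion and invokes the structure theorem \cite[Satz 2.3]{5}, which gives $T(M^\circ)$ bounded and $M^\circ/T(M^\circ)$ finitely generated; dualizing back, $D(M)^\circ$ is a finitely generated torsion-free reflexive quotient of $M^\circ/T(M^\circ)$ (hence $D(M)$ is artinian and reflexive), and $r\cdot\moo$, hence $r\cdot M$, is artinian, from which the boundedness of $M/D(M)$ follows. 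Two auxiliary claims in your plan are moreover false as stated: $D(M)=M\cap D(\moo)$ fails in general (already $k\subset E$ gives $D(k)=0\neq k\cap D(E)$), so even a uniform bound on $\moo/D(\moo)$ would not descend to $M$ in the way you propose without also exploiting the artinian-ness of $D(\moo)$; and $D(\moo)$ need not be a finite direct sum of copies of $E$, since divisible artinian modules need not be injective --- the dual $D(M)^\circ$ is only finitely generated torsion-free, not free.
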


\begin{proof} $(i \to ii)$\quad Wegen Ass$(\moo)= $ Koass$(M^\circ)$ ist nach Voraussetzung\\ $\bigcap $ Koass$(M^\circ)\neq 0$, d.h. $M^\circ$ im Sinne von \cite[p.126]{5} {\it stark kotorsion}, so da"s nach dem dortigen Satz 2.3 in der exakten Folge
$$0\;\longrightarrow\; T(M^\circ)\;\subset\; M^\circ\;\longrightarrow \; M^\circ/T(M^\circ)\;\longrightarrow \;0$$
das erste Glied beschr"ankt und das dritte endlich erzeugt ist.
F"ur den divisiblen Anteil $D(M)$ ist dann $D(M)^\circ$ torsionsfrei, als Faktormodul von $M^\circ/T(M^\circ)$ also
endlich erzeugt und reflexiv (sogar rein-injektiv), also $D(M)$ selbst artinsch und reflexiv.
In der exakten Folge
$$0\;\longrightarrow\; [M^\circ/T(M^\circ)]^\circ \;\longrightarrow\; \moo\;\longrightarrow \;[T(M^\circ)]^\circ\;\longrightarrow \;0$$
ist dann das erste Glied artinsch und das dritte beschr"ankt, also $r \cdot \moo$ artinsch f"ur ein $0\neq r\in R.$
Es folgt $r \cdot M$ artinsch, $r \cdot M/D(r \cdot M)$  beschr"ankt, also auch $M/D(M)$ beschr"ankt wie behauptet.

$(ii \to i)$\quad Zu $D=D(M)$ gibt es nach Voraussetzung ein $0\neq r\in \my$ mit $r \cdot M/D =0$. F"ur jedes $\py \in $ Ass$(\moo)$ gilt dann $\py \in $ Ass$(D^{\circ\circ})$ oder
$\py \in $ Ass$((M/D)^{\circ\circ}),\;\, \py =\my$ oder $r \in \py$, also $r \in \bigcap $ Ass$(\moo)$ wie verlangt.
\end{proof}

\noindent{{\bf Bemerkung 2.2}\quad
Ist $R$ ein Integrit"atsring und $\moo$ nur torsion, so k"onnen wir wenigstens zeigen:\\}
(1) Stets ist $D(M)$ artinsch.\\
(2) Ist $R$ ein unvollst"andiger Nagataring, so ist $M$ sogar beschr"ankt.\\
(3) Besitzt Koass$(M^\circ)$ eine endliche finale Teilmenge, d.h. ist $M^\circ$ {\it gut} im Sinne von \cite[p.133]{5}, so ist $\moo$ bereits stark torsion.

\begin{beispiel}
Ist $R$ ein Integrit"atsring, $M$ injektiv und Rang$(M)= $ Rang$(\moo)$, so ist $M$ bereits reflexiv.
\end{beispiel}

\begin{proof}
$T(M)$ ist abgeschlossen, also direkter Summand in $M$, und in $M= T(M)\oplus M_1$ ist dann $M_1 \in \cC,\;\,M_1$ torsionsfrei, also $M_1$ reflexiv. Sei jetzt gleich $M$ injektiv und torsion: Nach Voraussetzung ist $\moo$ torsion, au"serdem $M^\circ$ flach, also gut, so da"s
$\moo$ sogar stark torsion ist und mit (2.1) die Behauptung folgt.
\end{proof}

\begin{beispiel}
Ist $R$ ein diskreter Bewertungsring, so gilt f"ur jeden $R$-Modul $M$:
$$\mbox{Rang}\,(M)\;=\;\mbox{Rang}\,(\moo)\;\Longleftrightarrow\; M=M_1\oplus M_2 \mbox{ mit $M_1$ reflexiv, $M_2$ beschr"ankt.}$$
\end{beispiel}

\begin{proof}
Nur $''\Rightarrow''$ ist zu zeigen, und weil "uber $R$ jeder Modul gut ist, ist $T(M)^{\circ\circ}$ sogar stark torsion, also nach (2.1) $D(T(M))$ artinsch und reflexiv,
$T(M)/D(T(M))$ beschr"ankt. Damit ist $T(M)=D(T(M))\oplus M_2$ rein-injektiv,
$M=T(M)\oplus M_3$, also $M=M_1\oplus M_2$ mit $M_1 =D(T(M))\oplus M_3$ reflexiv und $M_2$ beschr"ankt.
\end{proof}

Sei $R$ wieder beliebig und $M$ reflexiv. Dann folgt aus der Bijektion $\cL(M)\to \cL(M^\circ),\\ U\mapsto $ Ann$_{M^\circ}(U)$, da"s f"ur jeden Untermodul $U$ von $M$ die Menge $\{V \subset M|\,V+U=M\}$
ein minimales Element $V_0$ besitzt, ein sogenanntes (Additions-){\it Komplement} von $U$ in $M$.
Allein aus dieser Komplementeigenschaft folgt sehr viel f"ur die Struktur von $M$:

\setcounter{theorem}{2}
\begin{theorem}
Ist $R$ ein Integrit"atsring mit Quotientenk"orper $K\neq R$ und $M$ ein komplementierter $R$-Modul, so gilt
$$M\;=\;M_1+M_2+D(M),$$
wobei $M_1$ beschr"ankt, $M_2$ endlich erzeugt und $D(M)$ von der Form $D_1 \oplus D_2$ ist mit $D_1$ artinsch, $D_2 \cong K^n\;\,(n\geq 0).$
\end{theorem}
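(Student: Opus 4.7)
The plan is to dismantle $M$ in three stages: first peel off a torsion-free divisible summand, then control the remaining torsion-divisible piece, and finally handle the reduced complemented quotient. First I note that over the integral domain $R$ the torsion submodule $T(D(M))$ is automatically divisible (if $rx=y$ with $y$ killed by $s$, then $srx=0$ forces $x$ torsion). Hence $D(M)/T(D(M))$ is torsion-free and divisible, i.e.\ a $K$-vector space $\cong K^{(J)}$, in particular $R$-injective. This splits the sequence $0\to T(D(M))\to D(M)\to K^{(J)}\to 0$, and by injectivity of $K^{(J)}$ the inclusion $K^{(J)}\hookrightarrow M$ splits as well: $M=D'\oplus N$ with $D'\cong K^{(J)}$ and $D(N)=T(D(M))$ torsion-divisible.

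Second, I would argue that $|J|$ is finite and that $D(N)$ is artinian. As a direct summand of the complemented module $M$, the module $D'=K^{(J)}$ is itself complemented. The crucial property is hollowness of $K$: since $K\neq R$, any two elements of $K$ share a common denominator, so every finitely generated $R$-submodule of $K$ is proper, and $K$ is hollow. A complemented module cannot contain infinitely many summands isomorphic to such a hollow module with strictly ascending submodule chain ($R\subsetneq s^{-1}R\subsetneq\cdots$ inside $K$), because one could otherwise construct a ``diagonal'' submodule $U$ admitting no minimal complement. This forces $|J|=:n<\infty$, so $D_2:=D'\cong K^n$. The same hollowness/complement argument, applied now to copies of $E(R/\py)$ inside $D(N)$, shows that only finitely many such summands fit and that each has finite Loewy length; hence $D_1:=D(N)$ is artinian.

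Finally, $\bar N:=N/D_1$ is a reduced complemented $R$-module. By the structure theorem for reduced complemented modules available through the techniques of \cite{5}, one has $\bar N=\bar M_1+\bar M_2$ with $\bar M_1$ bounded and $\bar M_2$ finitely generated. A finitely generated preimage $M_2\subseteq N$ of $\bar M_2$ is immediate; for $M_1$, one uses a minimal complement $N_1$ of $D_1$ in $N$ (which exists since $N$ is complemented). Then $N_1\cap D_1$ is small in $N_1$ and in particular artinian, and a bounded preimage $M_1\subseteq N_1$ of $\bar M_1$ can be extracted modulo this small artinian piece. Combined with the $D_2$ summand this yields $M=M_1+M_2+D(M)$. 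The main obstacle is the hollow-summand argument of the second paragraph: proving that complementedness genuinely forbids infinite direct sums of $K$ (respectively of $E(R/\py)$ for $\py\neq\my$) inside $M$ is the technical crux and requires careful bookkeeping of complements along ascending chains; the lifting step at the end is routine but relies essentially on having already extracted the artinian piece $D_1$.
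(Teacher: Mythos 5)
Your overall strategy---split off the divisible part first and then treat the reduced quotient---inverts the paper's order, but the decisive finiteness statements are exactly the ones you leave unproved, so there is a genuine gap. You yourself flag the ``hollow-summand'' argument as the crux: the claim that a complemented module cannot contain $K^{(J)}$ with $J$ infinite as a summand is asserted, not proved, and the parallel claim for the torsion divisible part is in worse shape, because a divisible torsion module over a general noetherian local domain need not be injective and so need not decompose into copies of $E(R/\py)$ at all; moreover $E(R/\py)$ for $\py\neq\my$ has zero socle, so ``finite Loewy length'' cannot be what makes $D_1$ artinian. Finally, the ``structure theorem for reduced complemented modules'' you invoke in the last paragraph ($\bar N=\bar M_1+\bar M_2$ with $\bar M_1$ bounded and $\bar M_2$ finitely generated) is essentially the non-divisible half of the theorem you are trying to prove; citing it without proof makes the argument circular.

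The paper derives all three finiteness statements from a single source. It works with $P(M)$, the largest radical-full (radikalvoll) submodule, rather than with $D(M)$: the complement property forces $M/P(M)$ to be coatomic, whence a complement $V_0$ of $P(M)$ decomposes as $V_1+M_2$ with $\my^eV_1=0$ and $M_2$ finitely generated; $P(M)$ is again complemented and $P(M)/D(M)$ is cotorsion, so by Rudlof's theorem \cite{3} $P(M)$ is \emph{minimax}. Minimaxness then yields both that $D_1=T(D(M))$ is artinian (choose finitely generated $X\subset D_1[r]$ with $D_1/X$ artinian; then $D_1\cong rD_1\cong D_1/D_1[r]$ is a quotient of $D_1/X$) and that $D(M)/D_1\cong K^n$ with $n$ finite, while a complement $W_0$ of $D(M)$ in $P(M)$ is strongly cotorsion and radical-full, hence bounded by \cite{5}. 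To salvage your route you would have to establish the minimax property of the divisible part by hand, which is precisely the content of the machinery you are trying to bypass.
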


\begin{proof}
Sei $P(M)$ der gr"o"ste radikalvolle Untermodul von $M$.
Weil dann jeder Zwischenmodul $P(M) \subset U \subset M$, mit $M/U$ radikalvoll, ein Komplement in $M$ besitzt, folgt bereits $U=M$,
d.h. $M/P(M)$ ist koatomar. Ein Komplement $V_0$ von $P(M)$ in $M$ ist dann ebenfalls koatomar, also von der Form $V_0=V_1+M_2$ mit $M_2$ endlich erzeugt, $\my^eV_1=0$ f"ur ein $e\geq 1.$

Weil auch $P(M)$ komplementiert ist \cite[Lemma 2.6]{4}, besitzt $P(M)/D(M)$ wie eben keine teilbaren Faktormoduln, ist also kotorsion.
Nach \cite[Theorem 2.4]{3} ist nun $P(M)$ minimax, ein Komplement $W_0$ von $D(M)$ in $P(M)$ also sogar stark kotorsion, au"serdem radikalvoll, d.h. nach \cite[Satz 2.3]{5} schon beschr"ankt.
Damit ist auch $M_1 =W_0+V_1$ beschr"ankt und $M=M_1+M_2+D(M).$

Bleibt $D(M)$ zu zerlegen: Auch $D_1=T(D(M))$ ist minimax, besitzt also einen endlich erzeugten Untermodul $X$, so da"s $D_1/X$
artinsch ist, und aus $X \subset D_1[r]$ mit $0\neq r \in R$ folgt, da"s auch $D_1/D_1[r] \cong rD_1$, also $D_1$ selbst artinsch ist. Weil $D(M)/D_1$ teilbar, torsionsfrei und minimax ist, also isomorph zu $K^n(\geq 0)$, ist $D_1$ als reiner, artinscher Untermodul von $D(M)$ sogar direkter Summand,
$D(M)=D_1\oplus D_2$ und $D_2 \cong K^n$ wie gew"unscht.
\end{proof}

\noindent{{\bf Bemerkung 2.4}\quad
Seien $R$ und $M$ wie in Satz 2.3. Falls $\dim (R)\geq 2$, ist bekanntlich $K$ als $R$-Modul nicht minimax, also $D_2=0$ und daher $D(M)$ selbst artinsch.


\vspace{1cm}

\end{document}